\newtheorem{theorem}{Theorem}
\newtheorem{corollary}[theorem]{Corollary}
\newtheorem{conjecture}[theorem]{Conjecture}
\newtheorem{proposition}[theorem]{Proposition}
\newcommand{\N}{\mathbb{{N}}}
\begin{document}

\title[Sums of powers of Catalan triangle numbers]{Sums of powers of  Catalan triangle numbers}

\author[Miana]{Pedro J. Miana}
\address{Departamento de Matem\'aticas, Instituto Universitario de Matem\'aticas y Aplicaciones, Universidad de Zaragoza, 50009 Zaragoza, Spain.}
\email{pjmiana@unizar.es}

\author[Ohtsuka]{Hideyuki Ohtsuka}
\address{Bunkyo University High School, 1191-7, Kami, Ageo-city, Saitama
Pref., 362-0001, JAPAN.}
\email{otsukahideyuki@gmail.com}

\author[Romero]{Natalia Romero}
\address{Departamento de Matem\'aticas y Computaci\'{o}n, Universidad de La Rioja, 26004 Logro\~{n}o, Spain.}
\email{natalia.romero@unirioja.es}

\thanks{ P. J. Miana  has been partially supported  by Project MTM2013-42105-P, DGI-FEDER, of the MCYTS and Project E-64-FEDER, D.G. Arag\'on. N. Romero has been has been partially supported by  the Spanish Ministry of Economy and Competitiveness, Project MTM2014-52016-C2-1-P}

\subjclass[2010]{05A19; 05A10;   11B65}

\keywords{Catalan numbers; Combinatorial identities;  Binomial
coefficients, Catalan triangle, Harmonic numbers.}

\begin{abstract}
In this paper we consider
combinatorial numbers $C_{m, k}$  for  $m\ge 1$ and $k\ge 0$ which unifies the entries of the Catalan triangles $ B_{n, k}$ and $ A_{n, k}$ for appropriate values of parameters $m$ and $k$, i.e., $B_{n, k}=C_{2n,n-k}$ and  $A_{n, k}=C_{2n+1,n+1-k}$. In fact,  some of these numbers are the  well-known Catalan numbers $C_n$  that is $C_{2n,n-1}=C_{2n+1,n}=C_n$.

 We   present  new identities for recurrence relations, linear sums  and alternating sum of  $C_{m,k}$. After that, we check  sums (and alternating sums) of squares and cubes  of $C_{m,k}$ and, consequently, for  $ B_{n, k}$ and $ A_{n, k}$. In particular, one of these equalities solves an open problem posed in \cite{[GHMN]}. We also present some linear identities involving   harmonic numbers $H_n$ and Catalan triangles numbers $C_{m,k}$. Finally, in the last section new open problems and identities
involving $C_n$ are conjectured.


 \end{abstract}

\date{}

\maketitle

\section{Introduction}
 The well-known Catalan numbers $(C_n)_{n\ge 0}$  given by the formula
$$
 C_n={1\over n+1}{2n\choose n},\quad  \ n\ge 0,
$$
appear in a wide range of problems. For instance, the Catalan number $C_n$ counts the  number of ways to triangulate a regular polygon with $n + 2$ sides; or,
the number of  ways  that   $2n$ people seat around a circular table are simultaneously
shaking hands with another person at the table in such a way that none of the arms cross each other, see for example \cite{[Sl1], [St2]}.

The Catalan numbers may be defined recursively by $C_0=1$ and
$C_n= \sum_{i=0}^{n-1} C_i C_{n-1-i}$ for $n\ge 1$ and first terms in this sequence are
$$
1,\,\,1,\,\,2,\,\,5,\,\,14, \,\,42, \,\,132, \dots
$$
Catalan numbers have been studied in depth in many papers and monographs (see for example \cite{[CC]}-\cite{[MR2]}, \cite{[Sh74]}-\cite{[St2]}) and the Catalan
sequence is probably the most frequently encountered sequence. In   \cite{[Sl1]} the generalized $k$-th Catalan numbers  $\,_k {C}_n = {1\over n}{nk\choose n-1}$, $k\ge 1$, are considered to count  the  number of ways of subdividing a convex polygon into $k$ disjoint $(n+1)$-polygons by means of non-intersecting diagonals, $k\ge 1$, see also  for example  \cite{[Chu], [HP]}.

In this paper,  we consider combinatorial numbers $(C_{m, k})_{m\ge 1,   k \ge 0 }$  given by
\begin{equation} \label{newcn}
 C_{m, k}:=\frac{m-2k}{m}{m \choose k}.                                                                                                                                                                                                                                                                                                      \end{equation}
 We collect the first values in the following table
 \begin{equation} \label{shapiro}
\begin{tabular}{c|cccccccccccc}
  $m\setminus k$&0 &1  & 2 & 3 & 4 & 5&6&7&8&9 &10&\dots
  \\
    \hline
  1 &  1 & -1 & \ &  \ &  \ &\ & \ &\ &\ &\ & \   & \   \\
   2&1 & 0 & -1& \ & \ & \  & \  & \ & \ & \ & \  & \    \\
  3& 1 & 1 &  -1& -1&\  &\  & \ & \ & \ & \ & \   & \   \\
 4& 1 & 2 & 0 & -2& -1 & \ & \ & \ & \ & \ & \   & \    \\
 5& 1 & 3 & 2 & -2 & -3 & -1 &  & \ & \ &\ & \   & \    \\
 6& 1 & 4 & 5 &  0 & -5&-4 &-1 & \   & \  & \ & \   & \     \\
  7& 1 & 5 & 9 &  5 & -5&-9 &-5 & -1   & \  & \ & \   & \     \\
   8& 1 & 6 & 14 &  14 & 0&-14 &-14 & -6   & -1  & \ & \   & \     \\
      9& 1 & 7 & 20 &  28 &  14&-14 &-28 & -20   & -7  & -1 & \     & \   \\
        10& 1 & 8 & 27 &  48 & 42&0 &-42 & -48   & -27  & -8 & -1    & \    \\
  \dots & \dots & \dots & \dots & \dots&\dots &\dots & \dots& \dots& \dots& \dots& \dots & \dots   \\
\end{tabular}
\end{equation}

These combinatorial  numbers  $(C_{m, k})_{m\ge 1,   k \ge 0 }$  are closely related to Catalan numbers $C_n$ and   generalized (or  higher) Catalan numbers $\,_k {C}_n $.  In fact,  it follows that
$$
\begin{array}{lll}
C_{2n,n-1}=C_n=C_{2n+1,n},
\vspace{0.2cm}\\
\displaystyle{C_{kn+1, n }={(k-2)n+1\over kn+1}{kn+1\choose n}=((k-2)n+1)\,_k {C}_n}.
\end{array} $$

These numbers  $(C_{m, k})_{m\ge 1,   k \ge 0 }$ appear in several Catalan triangles. For instance, $C_{2n, n-k}=B_{n, k}$, where
$$B_{n,k}={k\over n}{2n\choose n-k},\quad 0\le k\le n,
$$
(see \cite{[GHMN], [Sh74]}) and also $C_{2n+1, n+1-k}=A_{n, k}$,
where
$$
A_{n,k}={2k-1\over 2n+1}{2n+1\choose n+1-k}, \quad 1\le k\le n+1,
$$
(see \cite{[MR2]}).

 This paper is organized as follows. In the second section, we  present  a new recurrence relation   that satisfies numbers $(C_{m, k})_{m\ge 1,   k \ge 0 }$ in  Proposition \ref{recurre}. Moreover, we establish    new identities  in the sum of  $C_{m,k}$ and their alternating, $(-1)^k C_{m, k}$ in  Theorem \ref{alte}. Next, as consequence in Corollary \ref{altecubes}, we obtain    the alternating sum   of the entries
  of the two  Catalan triangle numbers $(B_{n,k})_{ n\ge  k \ge 1 }$ and $(A_{n,k})_{ n+1\ge  k \ge 1}$.

In the third section, we obtain the value of ${\sum_{k=0}^n C_{m,k}^2}$ and ${\sum_{k=0}^n (-1)^k C_{m,k}^2}$ for $m, n\ge 1$ in Theorem \ref{newsquare}. We also show two identities  which allows to decompose squares of  combinatorial  numbers as sum of squares of other combinatorial numbers. In particular, the nice  identity
$$
{2n \choose n}^2=\sum_{k=0}^{n}{3n-2k\over n}{2n-1-k\choose n-1}^2, \qquad n\ge 1,
$$
 is  presented in Theorem \ref{main}.

The forth section is dedicated to the sum of cubes of   numbers $(C_{m, k})_{m\ge 1,   k \ge 0 }$.  For $m\ge 1$ and $n\ge 1$, we present the identity
 $$
 \sum_{k=0}^{n}C_{m, k}^3=4{m-1 \choose n}^3-3{m-1 \choose n}\sum_{j=0}^{m-1}{j \choose n}{j \choose m-n-1},
 $$
in Theorem \ref{sumacubo}(i). Thus, from this identity we obtain
\begin{eqnarray*}
\sum_{k=1}^n B_{n, k}^3&=&{n+1\over 2}C_nb(n),\cr
\sum_{k=1}^{n+1} A_{n, k}^3&=&(n+1)C_n\left((2(n+1)C_n)^2-3a(n)\right),\qquad n\in \N,
\end{eqnarray*}
in  Theorem \ref{cubes2} and Corollary \ref{cubes} respectively, where integer sequences $(a(n))_{n\ge 0}$ and $(b(n))_{n\ge 1}$  are defined by $$
a(n):=\sum_{k=0}^{n}{n+k\choose n}^2\qquad \hbox{and}\qquad  b(n):=\sum_{k=0}^{n}\frac{n-k}{  n}{n-1+k\choose n-1}^2.
$$
This first sum solves the third open problem posed in \cite[Section 3]{[GHMN]}. These sequences $(a(n))_{n\ge 0}$ and $(b(n))_{n\ge 1}$  appear in the On-Line Encyclopedia of Integer Sequences  (\cite{[Sl]}). We also present  the value of the alternating sum   $\sum_{k=0}^{n}(-1)^kC_{m, k}^3$ in Theorem \ref{sumacubo}(ii).

Identities which involved harmonic numbers $(H_n)_{n\ge 1}$ where
 \begin{equation}\label{armoni}H_n=\sum_{k=1}^{n}  {1\over k}, \quad    n\in \mathbb{N},\end{equation}
 have received a notable attention in last decades. We only mention shortly papers \cite{[Chu2], [PS], [Sp]}, the monograph \cite[Chapter 7]{[BQ]} and the reference therein.

 In the fifth section we present a new identity which involves harmonic numbers $(H_n)_{n\ge 1}$ and Catalan triangle numbers $(C_{m, k})_{m\ge 1,   k \ge 0 }$ in Theorem \ref{teoharm} (and then  for $B_{n,k}$ and $A_{n,k}$ in Corollary \ref{cubes22}). This identity includes, as particular case, a known equality proved in \cite{[PS]}.

In the  last section we conjecture some  identities which involves  numbers $B_{n,k}$ and $A_{n, k}$. Although the WZ-theory (see for example \cite{[MR], [PWZ], [PS], [WZ]}) allows to give  computer proofs, authors can not find an analytic proof of these equalities. Note that  analytic proofs give additional  information about the nature of these sequences which remains hidden in computer proofs.

\medskip
\noindent{\bf Notation.} We follow the usual convention that ${u \choose v}$
is zero if $u < v$ (in particular  ${u \choose -1}$ for $u\ge 0$) and a sum is zero if its range of summation is empty.

\section{Recurrence relation and sums  of   Catalan triangle numbers}

One of  the main aim of this section is to prove a recurrence relation (Proposition \ref{recurre}) that satisfies
combinatorial numbers $(C_{m, k})_{m\ge 1,   k \ge 0 }$ given in
(\ref{newcn}). Moreover, for $m\ge 2$ and $n\ge 1$, we obtain the sum of combinatorial numbers $(C_{m, k})_{m\ge 1, k \ge 0 }$ and the alternating sum, that is,
$
\sum_{k=0}^n (-1)^kC_{m, k}
$
in Theorem \ref{alte} which includes some known identities for Catalan triangle numbers $(B_{n,k})_{ n\ge  k \ge 1 }$ and $(A_{n,k})_{ n+1\ge  k \ge 1}$.

These numbers also are related to   the entries $
B_{n,k}$ and $A_{n,k}$ of the two particular  Catalan triangles. In fact,  the combinatorial numbers $
B_{n,k}$
are the  entries of the following Catalan triangle   introduced in \cite{[Sh74]}:
\begin{equation} \label{shapiro}
\begin{tabular}{c|ccccccc}
  $n\setminus k$ &1  & 2 & 3 & 4 & 5&6&\dots
  \\
    \hline
  1 & 1 & \ &  \ &  \ &\ & \ & \\
  2 & 2 & 1& \ & \ &\  &\  & \ \\
  3 & 5 &  4& 1&\  &\  & \ & \ \\
  4 & 14 & 14 & 6& 1 &\ &\ & \ \\
  5 & 42 & 48 & 27 & 8&1 &\ & \ \\
  6 & 132 & 165 & 110 & 44&10 &1 & \  \\
  \dots & \dots & \dots & \dots & \dots&\dots &\dots & \dots \\
\end{tabular}
\end{equation}
which are given by
\begin{equation}\label{numbers1}B_{n,k}:={k\over n}{2n\choose n-k}, \ n,k\in \N,\ k\le n.
\end{equation}
Notice that $B_{n,1}=C_n$ and $C_{2n,n-k}=B_{n,k}$ for $k\le n$.

Although   numbers  $B_{n,k}$  are not as well known as
 Catalan numbers, they have also several applications, for example,
$B_{n,k}$ is the number of walks of $n$ steps, each in direction $N$, $S$, $W$ or $E$,
 starting at the origin, remaining in the upper half-plane and ending at height
 $k$;   see more details in  \cite{[DS],[Sh74],[Sl]} for more
information.

In the last years,  Catalan triangle (\ref{shapiro}) has been studied in detail.   For instance, the  formula
\begin{equation}\label{ident}
\sum_{k=1}^{i}B_{n,k}B_{n,n+k-i}(n+2k-i)=(n+1)C_n{2(n-1)\choose
i-1}, \quad i\le n,
\end{equation}
which appears in a problem related with the dynamical behavior of a
family of iterative processes has been proved in \cite[Theorem 5]{[GHMN]}.
These numbers $(B_{n,k})_{ n\ge  k \ge 1 }$ have been analyzed
in many ways. For instance,  symmetric functions have been used in \cite{[CC]},  recurrence relations  in \cite{[Sla]}, or in \cite{[GZ]}  the Newton interpolation formula, which is applied to conclude  divisibility properties of sums of products of binomial coefficients.

Other  combinatorial numbers $A_{n,k}$
defined as follows
\begin{equation}\label{numbers2}
A_{n,k}:={2k-1\over 2n+1}{2n+1\choose n+1-k}, \ n,k\in \N,\ k\le n+1,
\end{equation}
appear as the entries of this second Catalan triangle,
\begin{equation}\label{shapiro2}
\begin{tabular}{c|ccccccc}
  $n\setminus k$ &1  & 2 & 3 & 4 & 5&6&\dots
  \\
    \hline
   1 & 1 & 1 &  \ &  \ &\ & \ & \\\
  2 & 2& 3 & 1& \  &\  &\  & \ \\
  3 & 5& 9 &  5& 1&\    & \ & \ \\
  4 & 14& 28 &20 & 7& 1 &\  & \ \\
  5 & 42 & 90 & 75& 35&9&1 & \ \\
  6 & 132 & 297 & 275 & 154&54 &11 & 1  \\
  \dots & \dots & \dots & \dots & \dots&\dots &\dots & \dots \\
\end{tabular}
\end{equation}
which  is considered in \cite{[MR2]}.
Notice that $A_{n,1}=C_n$ and $C_{2n+1,n-k+1}=A_{n,k}$ for $k\le n+1$.

The entries $
B_{n,k}$ and $A_{n,k}$ of the above two particular  Catalan triangles  satisfy  the  recurrence relations
\begin{equation}\label{recurr1}B_{n,k}=B_{n-1,k-1}+2B_{n-1,k}+B_{n-1,k+1},\qquad k\ge 2,\end{equation}
and
\begin{equation}\label{recurr2}A_{n,k}=A_{n-1,k-1}+2A_{n-1,k}+A_{n-1,k+1},\qquad k\ge 2.\end{equation}

Now, we show that numbers $(C_{m, k})_{m\ge 1,    k \ge 0 }$ also satisfy a recurrence relation which extends recurrence relations (\ref{recurr1}) and (\ref{recurr2}).

\begin{proposition}\label{recurre}  For $m\ge 1$ and $ k \ge 2$, the following  identity holds:
$$
C_{m+2,k}=C_{m,k}+ 2C_{m,k-1}+C_{m,k-2}.
$$
\end{proposition}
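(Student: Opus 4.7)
The plan is to express $C_{m,k}$ as a simple difference of two binomial coefficients, after which both sides of the claimed identity collapse to the same expression via Pascal's rule.

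First I would observe the key rewriting
\begin{equation*}
C_{m,k} \;=\; \frac{m-2k}{m}\binom{m}{k} \;=\; \binom{m-1}{k}-\binom{m-1}{k-1},
\end{equation*}
which follows by pulling a factor of $(m-1)!/(k!(m-k)!)$ out of the difference on the right and simplifying the bracket $(m-k)-k = m-2k$. This is the only nontrivial ingredient, and once it is in place the rest is bookkeeping.

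Next I would apply this representation to both sides. The left-hand side becomes
\begin{equation*}
C_{m+2,k} \;=\; \binom{m+1}{k} - \binom{m+1}{k-1},
\end{equation*}
while expanding the right-hand side term by term gives
\begin{equation*}
C_{m,k}+2C_{m,k-1}+C_{m,k-2} \;=\; \binom{m-1}{k}+\binom{m-1}{k-1}-\binom{m-1}{k-2}-\binom{m-1}{k-3},
\end{equation*}
after the interior $\pm$ contributions cancel in pairs.

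The final step is to apply Pascal's rule $\binom{m}{j}=\binom{m-1}{j}+\binom{m-1}{j-1}$ to collapse both expressions. On the right-hand side the first two terms combine to $\binom{m}{k}$ and the last two to $\binom{m}{k-2}$, yielding $\binom{m}{k}-\binom{m}{k-2}$. On the left-hand side, expanding $\binom{m+1}{k}$ and $\binom{m+1}{k-1}$ by Pascal's rule produces the same expression $\binom{m}{k}-\binom{m}{k-2}$ (the two copies of $\binom{m}{k-1}$ cancel). Equality of both sides therefore holds.

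I do not anticipate a real obstacle here: the only conceptual step is spotting the representation $C_{m,k}=\binom{m-1}{k}-\binom{m-1}{k-1}$, which immediately unifies the boundary cases $k=0$ and the convention $\binom{m-1}{-1}=0$, and after that the identity is a direct verification using Pascal's rule twice. This also explains naturally why the relation is a ``second difference'' analogue of the Catalan triangle recurrences (\ref{recurr1}) and (\ref{recurr2}), since $C_{m,k}$ is itself a first difference of binomial coefficients.
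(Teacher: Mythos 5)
Your proof is correct, and it takes a genuinely different route from the paper. The paper proves the identity by brute force: it puts $C_{m,k}+2C_{m,k-1}+C_{m,k-2}$ over the common factor $\frac{(m-1)!}{k!(m-k+2)!}$ and verifies that the resulting polynomial $P(m,k)$ factors as $m(m+1)(m+2-2k)$, which reassembles into $C_{m+2,k}$. You instead isolate the representation $C_{m,k}=\binom{m-1}{k}-\binom{m-1}{k-1}$ (a one-line factorial check) and then reduce both sides to $\binom{m}{k}-\binom{m}{k-2}$ by two applications of Pascal's rule. Your identity checks out, including the boundary case $k=2$ under the paper's convention $\binom{u}{-1}=0$. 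What your approach buys is structural insight: it exhibits $C_{m,k}$ as a first difference of a binomial row, so the stated recurrence is just the statement that the second difference operator $(1+E)^2$ in $k$ commutes with passing from row $m-1$ to row $m+1$ of Pascal's triangle; this also makes the analogy with the recurrences (\ref{recurr1}) and (\ref{recurr2}) transparent. The paper's computation is shorter to write down but offers no such explanation. The same difference representation would in fact streamline the proof of Theorem \ref{alte}(i) by telescoping, so it is a genuinely useful alternative viewpoint rather than a cosmetic variation.
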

\begin{proof} Note that
\begin{eqnarray*}
C_{m,k}+ 2C_{m,k-1}+C_{m,k-2}&=&{(m-1)!\over k!(m-k+2)!}P(m,k),
\end{eqnarray*}
where
\begin{eqnarray*}
P(m,k)&=& (m-2k)(m-k+2)(m-k+1)+2(m-2k+2)k(m-k+2)\\&\quad&\qquad +(m-2k+4)k(k-1)\\&=&m(m+1)(m+2-2k).
\end{eqnarray*}
Finally we conclude that
$$
C_{m,k}+ 2C_{m,k-1}+C_{m,k-2}={m+2-2k\over m+2}{m+2\choose k}=C_{m+2,k},
$$
and the proof is finished.
\end{proof}

As it was shown in \cite{[Sh74]}, the values of the sums of  $B_{n, k}$  and $A_{n,k}$ in terms of Catalan numbers is given by:
 \begin{equation}\label{suma1}\displaystyle{\sum_{k=1}^{n}B_{n,k}= {n+1\over
 2}C_n} \qquad   \hbox{ and }\qquad \displaystyle{\sum_{k=1}^{n+1}A_{n,k}=(n+1)C_{n}},\end{equation}
and the sums of its squares by
\begin{equation}\label{sumacuadrados}\displaystyle\sum_{k=1}^{n}B_{n,k}^2= C_{2n-1}   \qquad   \hbox{ and }\qquad \displaystyle\sum_{k=1}^{n+1}A_{n,k}^2=C_{2n}, \quad n\in \N. \end{equation}
However the sums of its cubes $ \sum_{k=1}^{n}B_{n,k}^3 $ (posed in \cite[Section 3]{[GHMN]}) and $ \sum_{k=1}^{n+1}A_{n,k}^3 $ in terms of Catalan numbers were unknown until now. This and other questions are studied in the in the next two sections.

To conclude this section we give  the sum of numbers  $(C_{m,k})_{m\ge 1,  k \ge 0 }$ and for  their alternating sum   in the following theorem.
\begin{theorem}\label{alte} For $m\ge 2$ and $n \ge 1$, we obtain the following identities:
\begin{itemize}
\item[(i)]${\displaystyle\sum_{k=0}^n C_{m,k}= {m-1\choose n}}$,

\item[(ii)]
${\displaystyle\sum_{k=0}^n (-1)^kC_{m,k}=(-1)^nC_{m-1,n}}$.
\end{itemize}
\end{theorem}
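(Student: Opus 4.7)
The plan is to establish a single structural identity for $C_{m,k}$ that makes both sums almost immediate. Starting from the definition, write
\[
C_{m,k}=\binom{m}{k}-\frac{2k}{m}\binom{m}{k}=\binom{m}{k}-2\binom{m-1}{k-1},
\]
and then apply Pascal's rule $\binom{m}{k}=\binom{m-1}{k}+\binom{m-1}{k-1}$ to obtain the key telescoping decomposition
\[
C_{m,k}=\binom{m-1}{k}-\binom{m-1}{k-1}.
\]
This is valid for every $m\ge 2$ and $k\ge 0$ once one adopts the convention $\binom{m-1}{-1}=0$ stated at the end of the introduction.

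For (i), the decomposition telescopes at once:
\[
\sum_{k=0}^{n}C_{m,k}=\sum_{k=0}^{n}\left[\binom{m-1}{k}-\binom{m-1}{k-1}\right]=\binom{m-1}{n}.
\]

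For (ii), I would split the alternating sum into two pieces and shift the index on the second one. Using $\binom{m-1}{-1}=0$, the shift $j=k-1$ yields
\[
\sum_{k=0}^{n}(-1)^{k}\binom{m-1}{k-1}=-\sum_{j=0}^{n-1}(-1)^{j}\binom{m-1}{j},
\]
so that
\[
\sum_{k=0}^{n}(-1)^{k}C_{m,k}=(-1)^{n}\binom{m-1}{n}+2\sum_{k=0}^{n-1}(-1)^{k}\binom{m-1}{k}.
\]
I then invoke the standard partial-sum identity $\sum_{k=0}^{n-1}(-1)^{k}\binom{m-1}{k}=(-1)^{n-1}\binom{m-2}{n-1}$ (easily proved by induction or by the same Pascal-telescoping trick). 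This gives
\[
\sum_{k=0}^{n}(-1)^{k}C_{m,k}=(-1)^{n}\left[\binom{m-1}{n}-2\binom{m-2}{n-1}\right]=(-1)^{n}C_{m-1,n},
\]
where the last equality is just the decomposition $C_{m-1,n}=\binom{m-1}{n}-2\binom{m-2}{n-1}$ obtained in the first display above, applied with $m$ replaced by $m-1$.

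There is no real obstacle here; the only nontrivial observation is the telescoping form of $C_{m,k}$. Once that is in hand, (i) is a one-line telescoping sum and (ii) follows by a routine index shift together with a classical partial alternating binomial-sum identity.
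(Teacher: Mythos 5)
Your proof is correct, and it takes a genuinely different route from the paper. The paper proves the theorem by induction on $n$ (writing out the inductive step only for item (ii) and leaving (i) to the reader), which amounts to verifying at each step that the new term combines correctly with the closed form. You instead isolate the structural identity
\[
C_{m,k}=\binom{m}{k}-2\binom{m-1}{k-1}=\binom{m-1}{k}-\binom{m-1}{k-1},
\]
valid for all $k\ge 0$ under the paper's convention $\binom{u}{-1}=0$, which exhibits $C_{m,k}$ as a first difference of the binomial coefficients $\binom{m-1}{k}$. This makes (i) a one-line telescoping sum and reduces (ii) to the classical partial alternating sum $\sum_{k=0}^{N}(-1)^k\binom{M}{k}=(-1)^N\binom{M-1}{N}$ (itself a telescoping identity), after which the answer is recognized as $(-1)^nC_{m-1,n}$ via the same decomposition with $m$ replaced by $m-1$. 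All the index shifts and edge cases ($k=0$, and $k>m-1$ where both sides vanish) check out. What your approach buys is an explanation of \emph{why} these partial sums close up in one term — $C_{m,k}$ is a discrete derivative — and it handles both items uniformly; the paper's induction is more mechanical but requires no preliminary lemma. Both arguments are equally rigorous.
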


\begin{proof} Note that it is enough to check by induction process the  identities. We only prove  item (ii).
For $n=1$, we directly check  the identity. Now suppose that the identity holds for $n$. Then
\begin{eqnarray*}
\sum_{k=0}^{n+1}(-1)^k C_{m,k}&=&(-1)^nC_{m-1,n}+(-1)^{n+1}C_{m,n+1}\\
 &=&(-1)^n{m-2n-1\over m-1}{m-1\choose n}+(-1)^{n+1}{m-2n-2\over  n+1}{m-1\choose n}\\
 &=&(-1)^{n+1} {(m-2n-3)(m-n-1)\over (m-1)(n+1)}{m-1\choose n} \cr &=& (-1)^{n+1}{m-2n-3\over m-1}{m-1\choose n+1}=(-1)^{n+1}C_{m-1,n+1}.
\end{eqnarray*}
\end{proof}
Notice that   item (i) in  Theorem \ref{alte} includes the identities given in (\ref{suma1}). On the other hand, item (i) in the next corollary  was proved in \cite{[Ep]} and we present an alternative proof.
\begin{corollary} \label{altecubes}
For $n\ge 1$, we have
\begin{itemize}
 \item[(i)]$\displaystyle{\sum_{k=1}^{n}(-1)^{k} B_{n,k}=-C_{n-1}}$,
\item[(ii)] $\displaystyle{\sum_{k=1}^{n+1}(-1)^k A_{n,k}=0}$.
\end{itemize}
\end{corollary}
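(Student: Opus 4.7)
The plan is to reduce both alternating sums to the alternating sum formula in Theorem \ref{alte}(ii) by means of the identifications $B_{n,k}=C_{2n,n-k}$ and $A_{n,k}=C_{2n+1,n+1-k}$ that were recorded right after definitions \eqref{numbers1} and \eqref{numbers2}. This re-indexes each sum as an initial segment $\sum_{j=0}^{N}(-1)^{j}C_{m,j}$ with a leading sign coming from the change of variable, and Theorem \ref{alte}(ii) closes everything up to an evaluation of a single $C_{m-1,N}$.

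For part (i), I would substitute $j=n-k$ so that
\[
\sum_{k=1}^{n}(-1)^{k}B_{n,k}=\sum_{j=0}^{n-1}(-1)^{n-j}C_{2n,j}=(-1)^{n}\sum_{j=0}^{n-1}(-1)^{j}C_{2n,j},
\]
since $(-1)^{-j}=(-1)^{j}$. Applying Theorem \ref{alte}(ii) with parameters $m=2n$ and upper index $n-1$ gives $\sum_{j=0}^{n-1}(-1)^{j}C_{2n,j}=(-1)^{n-1}C_{2n-1,n-1}$, so the left-hand side equals $-C_{2n-1,n-1}$. A direct calculation using \eqref{newcn} yields $C_{2n-1,n-1}=\frac{1}{2n-1}\binom{2n-1}{n-1}=\frac{(2n-2)!}{(n-1)!\,n!}=C_{n-1}$, which is exactly the desired value $-C_{n-1}$.

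For part (ii), the same kind of reindexing with $j=n+1-k$ gives
\[
\sum_{k=1}^{n+1}(-1)^{k}A_{n,k}=(-1)^{n+1}\sum_{j=0}^{n}(-1)^{j}C_{2n+1,j},
\]
and Theorem \ref{alte}(ii) with $m=2n+1$ and upper index $n$ evaluates the inner sum as $(-1)^{n}C_{2n,n}$. Since $C_{2n,n}=\frac{2n-2n}{2n}\binom{2n}{n}=0$ by definition \eqref{newcn}, the total vanishes.

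The only potentially delicate point is bookkeeping with the sign $(-1)^{n-j}$ and making sure the upper limit of the $j$-sum matches the hypothesis of Theorem \ref{alte}(ii); both are routine and fit perfectly. There is no genuine obstacle here: the corollary is a purely mechanical consequence of the dictionary between $B_{n,k}$, $A_{n,k}$ and the $C_{m,k}$, combined with the vanishing $C_{2n,n}=0$ that produces the zero in part (ii) and the Catalan number $C_{n-1}$ in part (i).
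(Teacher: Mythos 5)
Your proposal is correct and follows essentially the same route as the paper: identify $B_{n,k}$ and $A_{n,k}$ with $C_{2n,n-k}$ and $C_{2n+1,n+1-k}$, reverse the order of summation, and apply Theorem \ref{alte}(ii), finishing with the evaluations $C_{2n-1,n-1}=C_{n-1}$ and $C_{2n,n}=0$. The only cosmetic difference is that in (i) the paper first adjoins the vanishing $k=0$ term so as to invoke the theorem with upper index $n$ (obtaining $C_{2n-1,n}=-C_{2n-1,n-1}$), whereas you stop at upper index $n-1$; for $n=1$ this means using the theorem at index $0$, where it holds trivially.
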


\begin{proof} By Theorem \ref{alte}, we have
$$
\sum_{k=1}^{n}(-1)^{k} B_{n,k}= \sum_{k=0}^{n}(-1)^{k} C_{2n,n-k}=\sum_{k=0}^{n}(-1)^{n-k} C_{2n,k}= C_{2n-1,n}=-C_{n-1},$$
and
$$
\sum_{k=1}^{n+1}(-1)^k A_{n,k}= \sum_{k=1}^{n+1}(-1)^{k} C_{2n+1,n-k+1}=\sum_{k=0}^{n}(-1)^{n-k+1} C_{2n+1,k}=-C_{2n,n}=0.
$$
\end{proof}

\section{Sums of  squares  of Catalan triangle numbers  }
\setcounter{theorem}{0}
\setcounter{equation}{0}
In the section, our main
objective is twofold. Firstly, we check $
\sum_{k=0}^n C^2_{m,k} $ and $
\sum_{k=0}^n (-1)^k C^2_{m,k} $ in Theorem \ref{newsquare}.  As a consequence of this result, the  identities presented in (\ref{sumacuadrados}) are proved in Corollary \ref {otros}.

Secondly, a key result of this paper is to decompose the binomial number $ {2n \choose n}^2$ in  sum of squares of other combinatorial numbers, i.e.
$$
{2n \choose n}^2=\sum_{k=0}^{n}{3n-2k\over n}{2n-1-k\choose n-1}^2, \qquad n\ge 1.
$$
To do that we present a straightforward proof as a consequence of a more general  identity in combinatorial numbers in Theorem \ref{main} (i). This equality is essential to check ${\sum_{k=1}^{n}B_{n,k}^3}$  in Theorem \ref{cubes2}.

\begin{theorem}\label{newsquare}For $n\ge 1$ and $m\ge 1$, we have
\begin{itemize}\item[(i)] $\displaystyle{
\sum_{k=0}^n C^2_{m,k}= {m-2n\over m}{m-1\choose n}^2+{2\over m}\sum_{k=0}^{n-1}{m-1\choose k}^2,}$
\item[(ii)] $\displaystyle{
\sum_{k=0}^n (-1)^kC^2_{m,k}= 2(-1)^n{m-1\choose n}^2-\sum_{k=0}^{n}(-1)^k{m\choose k}^2.}
$
\end{itemize}
\end{theorem}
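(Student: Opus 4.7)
My plan is to derive both identities from a single clean algebraic observation and then run a short induction on $n$. A direct computation shows
$$C_{m,k}=\binom{m-1}{k}-\binom{m-1}{k-1},$$
since $\binom{m-1}{k}-\binom{m-1}{k-1}=\frac{(m-1)!}{k!(m-k)!}\bigl((m-k)-k\bigr)=\frac{m-2k}{m}\binom{m}{k}$. Combined with Pascal's identity $\binom{m}{k}=\binom{m-1}{k}+\binom{m-1}{k-1}$, the elementary facts $(a-b)(a+b)=a^2-b^2$ and $(a-b)^2+(a+b)^2=2(a^2+b^2)$ furnish the two key ingredients
$$C_{m,k}\binom{m}{k}=\binom{m-1}{k}^2-\binom{m-1}{k-1}^2 \qquad (\dagger)$$
and
$$C_{m,k}^2+\binom{m}{k}^2=2\binom{m-1}{k}^2+2\binom{m-1}{k-1}^2. \qquad (\ddagger)$$

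For part (i), the base case $n=0$ reads $1=\frac{m}{m}\cdot 1+0$, which holds by the convention that empty sums vanish. For the inductive step, subtracting the induction hypothesis at $n$ from the claim at $n+1$ reduces the task to showing
$$C_{m,n+1}^2=\frac{m-2n-2}{m}\left(\binom{m-1}{n+1}^2-\binom{m-1}{n}^2\right),$$
because the terms $\tfrac{2}{m}\binom{m-1}{n}^2$ and $-\tfrac{m-2n}{m}\binom{m-1}{n}^2$ combine to $-\tfrac{m-2n-2}{m}\binom{m-1}{n}^2$. By $(\dagger)$ the right-hand side equals $\frac{m-2n-2}{m}\cdot C_{m,n+1}\binom{m}{n+1}$, which is $C_{m,n+1}^2$ since $C_{m,n+1}=\frac{m-2n-2}{m}\binom{m}{n+1}$ by definition.

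For part (ii), the base case $n=0$ reads $1=2-1$. For the inductive step, the analogous subtraction reduces the task to proving
$$C_{m,n+1}^2=2\binom{m-1}{n+1}^2+2\binom{m-1}{n}^2-\binom{m}{n+1}^2,$$
which is exactly $(\ddagger)$ evaluated at $k=n+1$. I foresee no serious obstacle: the only delicate point is tracking the boundary convention $\binom{m-1}{-1}=0$ in the base cases, which is already stipulated in the Notation at the end of Section~1. Both identities are thus corollaries of the telescoping decomposition $C_{m,k}=\binom{m-1}{k}-\binom{m-1}{k-1}$ read through a ``sum of squares versus difference of squares'' lens.
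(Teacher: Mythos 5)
Your proof is correct and follows essentially the same route as the paper: induction on $n$, where the inductive step rests on exactly the two algebraic identities the paper introduces with ``observe that'' (your $(\dagger)$ plus the coefficient bookkeeping reproduces the paper's identity for (i), and $(\ddagger)$ is literally the paper's identity for (ii)). The only difference is that you derive these identities transparently from the decomposition $C_{m,k}=\binom{m-1}{k}-\binom{m-1}{k-1}$ rather than asserting them by direct computation.
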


\begin{proof} We prove the identities  by invoking  an inductive process for $n$.

(i)  For $n=1$, we directly check  it.
Now we assume that the desired identity holds for $n$. For   $n+1$, we have
\begin{eqnarray*}
\sum_{k=0}^{n+1} C^2_{m,k}&=& {m-2n\over m}{m-1\choose n}^2+{2\over m}\sum_{k=0}^{n-1}{m-1\choose k}^2+\left({m-2n-2\over m}{m\choose n+1}\right)^2.
\end{eqnarray*}
On the other hand, observe that $$
\left({m-2n-2\over m}{m\choose n+1}\right)^2={m-2n-2\over m}{m-1\choose n+1}^2-{m-2n\over m}{m-1\choose n}^2+{2\over m}{m-1\choose n}^2.
$$Therefore, we obtain the identity
$$
\sum_{k=0}^{n+1} C^2_{m,k}= {m-2(n+1)\over m}{m-1\choose n+1}^2+{2\over m}\sum_{k=0}^{n}{m-1\choose k}^2.
$$

(ii) For $n=1$, we directly check it. Now we assume that the desired identity holds for $n$. For $n+1$, we have
$$
\sum_{k=0}^{n+1}(-1)^k C^2_{m,k}= 2(-1)^n{m-1\choose n}^2-\sum_{k=0}^{n}(-1)^k{m\choose k}^2
+(-1)^{n+1}\left({m-2n-2\over m}{m\choose n+1}\right)^2.
$$
On the other hand, observe that
$$
\left({m-2n-2\over m}{m\choose n+1}\right)^2=2{m-1\choose n+1}^2+ 2{m-1\choose n}^2-{m\choose n+1}^2.
$$
Therefore, we obtain  the identity
$$
\sum_{k=0}^{n+1} (-1)^kC^2_{m,k}= 2(-1)^{n+1}{m-1\choose n+1}^2-\sum_{k=0}^{n+1}(-1)^k{m\choose k}^2.
$$
\end{proof}

Now, taking into account the well-known Vandermonde identity
$ \sum_{k=0}^{n}{n\choose k}^2={2n\choose n} $ and identity $ \sum_{k=0}^{2n}(-1)^k{2n\choose k}^2=(-1)^n{2n\choose n} $ for $n\ge 0$, the following corollary is obtained.  Note that the Corollary \ref{otros} (iv) was proved in \cite[Theorem 2.2]{[Zhang]}.

\begin{corollary}\label{otros} For $n\ge  1$, we have
\begin{itemize}
\item[(i)] $\displaystyle{\sum_{k=0}^n C^2_{n,k}= 2 C_{n-1},}$
\item[(ii)] $\displaystyle{\sum_{k=1}^n B^2_{n,k}=  C_{2n-1},}$
\item[(iii)]$\displaystyle{\sum_{k=1}^{n+1} A^2_{n,k}=  C_{2n},}$
\item[(iv)] $\displaystyle{\sum_{k=1}^n (-1)^kB^2_{n,k}= -{n+1\over 2} C_{n}.}$
\end{itemize}
\begin{proof}
 From Theorem \ref{newsquare} (i), we  have
$$
\sum_{k=1}^n C^2_{n,k}={2\over n}\sum_{k=0}^{n-1}{n-1\choose k}^2=2C_{n-1},
$$
$$
\sum_{k=0}^n B^2_{n,k} =  \sum_{k=0}^n C^2_{2n,k}={2\over 2n}\sum_{k=0}^{n-1}{2n-1\choose k}^2={1\over 2n}\sum_{k=0}^{2n-1}{2n-1\choose k}^2= C_{2n-1},
$$
and
$$
\sum_{k=1}^{n+1} A^2_{n,k} =  \sum_{k=0}^{n} C^2_{2n+1,k}={1\over 2n+1}\left({2n\choose n}^2+ 2 \sum_{k=0}^{n-1}{2n\choose k}^2\right)={1\over 2n+1} \sum_{k=0}^{2n}{2n\choose k}^2 =C_{2n}.
$$
 As a consequence of Theorem \ref{newsquare} (ii), item (iv) is obtained
 \begin{eqnarray*}
\sum_{k=0}^n (-1)^k B^2_{n,k}&=& \sum_{k=0}^n (-1)^{n+k}  C^2_{2n,k}=2{2n-1\choose n}^2-\sum_{k=0}^{n }   (-1)^{n+k}     {2n\choose k}^2\cr&=&{-1\over 2 }   \sum_{k=0}^{2n } (-1)^{n+k}    {2n \choose k}^2= {-1\over 2 }  {2n \choose n}=-\frac{n+1}{2} C_{n}.
\end{eqnarray*}
\end{proof}
\end{corollary}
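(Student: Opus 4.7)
\medskip
\noindent\textbf{Proof proposal.}
The plan is to reduce the four identities to Theorem \ref{newsquare} by exploiting the correspondences $B_{n,k}=C_{2n,n-k}$ and $A_{n,k}=C_{2n+1,n+1-k}$, with the finishing blow in each case supplied by the Vandermonde evaluations
\[
\sum_{k=0}^{n}\binom{n}{k}^2=\binom{2n}{n},\qquad \sum_{k=0}^{2n}(-1)^k\binom{2n}{k}^2=(-1)^n\binom{2n}{n}.
\]

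For (i) I would substitute $m=n$ into Theorem \ref{newsquare}(i). The crucial observation is that $\binom{n-1}{n}=0$, so the first term vanishes and only $\frac{2}{n}\sum_{k=0}^{n-1}\binom{n-1}{k}^2=\frac{2}{n}\binom{2n-2}{n-1}=2C_{n-1}$ remains.

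For (ii) I would reindex $\sum_{k=1}^{n}B_{n,k}^2=\sum_{k=0}^{n-1}C_{2n,k}^2$ and apply Theorem \ref{newsquare}(i) with $m=2n$; the leading term again collapses (because $m-2n=0$), leaving $\frac{1}{n}\sum_{k=0}^{n-1}\binom{2n-1}{k}^2$. The symmetry $\binom{2n-1}{k}=\binom{2n-1}{2n-1-k}$ doubles the range to the full $\{0,1,\dots,2n-1\}$, after which Vandermonde delivers $C_{2n-1}$. Item (iii) is completely analogous with $m=2n+1$, except that now the leading $\binom{2n}{n}^2$ term survives; after using $\binom{2n}{k}=\binom{2n}{2n-k}$ it combines with the doubled tail to produce $\frac{1}{2n+1}\sum_{k=0}^{2n}\binom{2n}{k}^2=C_{2n}$.

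For (iv) I would start from Theorem \ref{newsquare}(ii) with $m=2n$, together with the sign-aware reindexing $\sum_{k=1}^{n}(-1)^{k}B_{n,k}^2=(-1)^{n}\sum_{j=0}^{n}(-1)^{j}C_{2n,j}^2$. The identity $\binom{2n-1}{n}=\tfrac12\binom{2n}{n}$ simplifies the leading term to $\tfrac12\binom{2n}{n}^2$. The remaining partial alternating sum $\sum_{j=0}^{n}(-1)^{j}\binom{2n}{j}^2$ is then handled by splitting $\sum_{j=0}^{2n}$ at $j=n$ and using $\binom{2n}{j}=\binom{2n}{2n-j}$, which reduces it to the alternating Vandermonde above. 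After the resulting algebra the two $\binom{2n}{n}^2$ contributions cancel and only $-\tfrac12\binom{2n}{n}=-\tfrac{n+1}{2}C_{n}$ survives. The main obstacle is really only bookkeeping: matching the $k$-ranges of $B_{n,k}$ and $A_{n,k}$ (which start at $k=1$) against the natural $k=0$ starting index of $C_{m,k}$, and, in the alternating case, carefully tracking signs when unfolding a half-range sum into a full-range Vandermonde identity. Once those reindexings are clean, no genuinely new ideas are needed.
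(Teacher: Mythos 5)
Your proposal is correct and follows essentially the same route as the paper: specialize Theorem \ref{newsquare} at $m=n$, $2n$, $2n+1$, reindex via $B_{n,k}=C_{2n,n-k}$ and $A_{n,k}=C_{2n+1,n+1-k}$, fold the half-range sums into full-range ones by the symmetry $\binom{m}{k}=\binom{m}{m-k}$, and finish with the Vandermonde and alternating-Vandermonde evaluations (including the same treatment of the leading term via $\binom{2n-1}{n}=\tfrac12\binom{2n}{n}$ in item (iv)). No substantive differences.
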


\begin{theorem}\label{main}For $m\ge n\ge 1$, we have
\begin{itemize}\item[(i)]$ \displaystyle{{m \choose n}^2=\sum_{j=n}^{m}\frac{2j-n}{n}{j-1 \choose n-1}^2,}$
\item[(ii)] $\displaystyle{
{2n \choose n}^2=\sum_{k=0}^{n}{3n-2k\over n}{2n-1-k\choose n-1}^2.}
$
\end{itemize}
\end{theorem}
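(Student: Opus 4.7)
The plan is to prove part (i) by induction on $m$, with $n \ge 1$ fixed, and then derive part (ii) as an immediate specialization via the substitution $m = 2n$ together with a reversal of the summation index.

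For the base case $m = n$ of part (i), both sides collapse to $1$: the left side is $\binom{n}{n}^2 = 1$, and the right side contains only the $j = n$ term, which equals $\frac{n}{n}\binom{n-1}{n-1}^2 = 1$. For the inductive step, assuming the identity for $m$, I isolate the new $j = m+1$ term on the right, so the task reduces to verifying
\begin{equation*}
\binom{m+1}{n}^2 - \binom{m}{n}^2 = \frac{2m+2-n}{n}\binom{m}{n-1}^2.
\end{equation*}
Expanding the left side with Pascal's rule $\binom{m+1}{n} = \binom{m}{n} + \binom{m}{n-1}$ gives $2\binom{m}{n}\binom{m}{n-1} + \binom{m}{n-1}^2$, and then the identity $\binom{m}{n} = \frac{m-n+1}{n}\binom{m}{n-1}$ converts this to $\bigl(\frac{2(m-n+1)}{n} + 1\bigr)\binom{m}{n-1}^2 = \frac{2m+2-n}{n}\binom{m}{n-1}^2$, matching the right side.

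For part (ii), I apply (i) with $m = 2n$ to obtain
\begin{equation*}
\binom{2n}{n}^2 = \sum_{j=n}^{2n} \frac{2j-n}{n}\binom{j-1}{n-1}^2,
\end{equation*}
and reindex with $k = 2n - j$ so that $j$ runs from $n$ to $2n$ exactly when $k$ runs from $n$ down to $0$. Under this substitution $2j - n$ becomes $3n - 2k$ and $\binom{j-1}{n-1}$ becomes $\binom{2n-1-k}{n-1}$, yielding the claimed formula.

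I do not anticipate a serious obstacle: the induction is purely algebraic and the extraction of (ii) from (i) is just a change of variable. The only step that requires care is the bookkeeping in the inductive verification, specifically ensuring the factor $\frac{m-n+1}{n}$ is applied correctly so that the combination $\frac{2(m-n+1)}{n} + 1$ simplifies to $\frac{2m+2-n}{n}$.
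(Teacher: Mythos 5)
Your proposal is correct and follows essentially the same route as the paper: induction on $m$ for part (i) with base case $m=n$, verifying that the increment $\binom{m+1}{n}^2-\binom{m}{n}^2$ equals the new term $\frac{2m+2-n}{n}\binom{m}{n-1}^2$, and then obtaining (ii) from (i) at $m=2n$ by the reindexing $k=2n-j$. The only cosmetic difference is that you expand the increment via Pascal's rule and the absorption identity $\binom{m}{n}=\frac{m-n+1}{n}\binom{m}{n-1}$, whereas the paper rewrites both $\binom{m}{n}$ and $\binom{m}{n-1}$ as multiples of $\binom{m+1}{n}$; both computations are equivalent.
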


\begin{proof} To prove item (i) we   invoke  an inductive process for  $m$.  For $m=n$, we check directly the identity. Now we assume that the identity  holds for $m$ and we prove it for $m+1$. Thus, it follows that
\begin{eqnarray*}
 &\quad&\sum_{j=n}^{m+1}\frac{2j-n}{n}{j-1 \choose n-1}^2= {m \choose n}^2+\frac{2m+2-n}{n}{m \choose n-1}^2\cr
 &\quad&\qquad= \left(\frac{m-n+1}{m+1}{m+1 \choose n}\right)^2+\frac{2m+2-n}{n}\left(\frac{n}{m+1}{m+1 \choose n}\right)^2= {m+1 \choose n}^2.
 \end{eqnarray*}
Then we conclude the identity holds for $m\ge n\ge 1$.

 To show item (ii) observe that
\begin{eqnarray*} \sum_{k=0}^{n}\frac{2(2n-k)-n}{n} {2n-k-1 \choose n-1}^2= \sum_{j=n}^{2n}\frac{2j-n}{n}{j-1 \choose n-1}^2={2n \choose n}^2,
\end{eqnarray*}
where we have applied item (i).
\end{proof}

\noindent{\bf Remark.} Notice that item (ii) in Theorem \ref{main} gives a decomposition of sum of squares of $ {2n \choose n}^2$ for $n\ge 1$, that can be written in this form
$$
{2n \choose n}^2= \sum_{j=0}^{n}{n+2j\over n}{n-1+j\choose n-1}^2.
$$

\section{Sums of  cubes  of Catalan triangle numbers }
\setcounter{theorem}{0}
\setcounter{equation}{0}

In this section  we  check  the sum of cubes and alternating cubes of numbers $(C_{m, k})_{m\ge 1,    k \ge 0 }$ in Theorem \ref{sumacubo}. For $m\ge 1$ and $n\ge 1$, we use  the  identity
\begin{equation}\label{AMM}
 \sum_{k=0}^{n}(m-2k){m \choose k}^3=(m-n){m \choose n}\sum_{j=0}^{m-1}{j \choose n}{j \choose m-n-1},
 \end{equation}
which is proven in \cite{[OT]}. We also present some expressions of ${\sum_{k=1}^{n}B_{n,k}^3}$, ${\sum_{k=1}^{n+1}A_{n,k}^3}$ and ${\sum_{k=1}^{n+1}(-1)^kA_{n,k}^3}$ in Corollary \ref{cubes}. The equality presented in Theorem \ref{cubes2}
 solves the  third open problem posed in \cite[Section 3]{[GHMN]}.

\begin{theorem}\label{sumacubo} For $m\ge 1$ and $n\ge 1$,we have
\begin{itemize}
 \item[(i)] $\displaystyle{\sum_{k=0}^{n}C_{m,k}^3=4{m-1 \choose n}^3-3{m-1 \choose n}\sum_{j=0}^{m-1}{j \choose n}{j \choose m-n-1},}$
 \item[(ii)] $\displaystyle{\sum_{k=0}^n(-1)^k C^3_{m,k}={m-3n\over m}(-1)^n{m-1\choose n}^3-{m-3\over m}\sum_{k=0}^{n-1}(-1)^k{m-1\choose k}^3}.
$
 \end{itemize}
\end{theorem}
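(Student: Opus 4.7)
The proof naturally splits into two parts. For (i) I would combine a telescoping identity with the AMM formula (\ref{AMM}); for (ii) a direct induction on $n$ seems much cleaner.

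\textbf{Part (i).} The starting point is the identity $C_{m,k} = \binom{m-1}{k} - \binom{m-1}{k-1}$, which is immediate from $(m-2k)\binom{m}{k} = m\bigl[\binom{m-1}{k} - \binom{m-1}{k-1}\bigr]$ and already drove Theorem \ref{alte}(i). Set $a_k := \binom{m-1}{k}$. Expanding
\[
C_{m,k}^3 = (a_k - a_{k-1})^3 = (a_k^3 - a_{k-1}^3) - 3 a_k a_{k-1}(a_k - a_{k-1})
\]
and summing over $k = 0, \dots, n$, the first bracket telescopes (using $a_{-1}=0$) to $\binom{m-1}{n}^3$. For the mixed product I would use the polarization identity
\[
4 a_k a_{k-1} = (a_k + a_{k-1})^2 - (a_k - a_{k-1})^2 = \binom{m}{k}^2 - C_{m,k}^2,
\]
which, after substitution and solving for $\sum C_{m,k}^3$, collapses to
\[
\sum_{k=0}^n C_{m,k}^3 = 4\binom{m-1}{n}^3 - 3\sum_{k=0}^n \binom{m}{k}^2 C_{m,k}.
\]
Since $\binom{m}{k}^2 C_{m,k} = \frac{m-2k}{m}\binom{m}{k}^3$, the remaining sum is exactly $\frac{1}{m}$ times the left-hand side of (\ref{AMM}), and one reads off (i) after invoking (\ref{AMM}) together with $(m-n)\binom{m}{n} = m\binom{m-1}{n}$. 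The one nontrivial move is the polarization step, which turns a cubic sum into a linear sum directly accessible to (\ref{AMM}).

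\textbf{Part (ii).} The closed form for the alternating sum contains no double sum in $j$, so (\ref{AMM}) is not the right input; I would proceed by induction on $n$. The base case $n=1$ is a direct check. For the inductive step, subtracting the hypothesis for $n$ from the target formula for $n+1$ and dividing by $(-1)^{n+1}$ reduces everything to the single identity
\[
C_{m,n+1}^3 = \frac{m-3(n+1)}{m}\binom{m-1}{n+1}^3 + \frac{2m-3(n+1)}{m}\binom{m-1}{n}^3.
\]
Using the ratio $(n+1)\binom{m-1}{n+1} = (m-n-1)\binom{m-1}{n}$, both sides become multiples of $\binom{m-1}{n}^3$, and (writing $u=n+1$) the equality collapses to the elementary polynomial identity
\[
m(m-2u)^3 = (m-3u)(m-u)^3 + (2m-3u)u^3,
\]
which is verified by direct expansion. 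The only mild obstacle is spotting the precise grouping of terms that produces this clean polynomial identity; once found, the argument is mechanical.
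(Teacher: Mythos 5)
Your proof is correct and follows essentially the same route as the paper: part (ii) is the paper's induction verbatim (your polynomial identity $m(m-2u)^3=(m-3u)(m-u)^3+(2m-3u)u^3$ is exactly the ``observe that'' step in the published proof), and part (i) rests on the same two ingredients, namely the identity (\ref{AMM}) and the telescoping of $\binom{m-1}{k}^3-\binom{m-1}{k-1}^3$. The only cosmetic difference is that you reach the per-term identity $m^3C_{m,k}^3+3m^2(m-2k)\binom{m}{k}^3=4m^3\bigl[\binom{m-1}{k}^3-\binom{m-1}{k-1}^3\bigr]$ via the cube-difference expansion and polarization, whereas the paper expands $(m-2k)^3+3m^2(m-2k)=4\bigl((m-k)^3-k^3\bigr)$ directly; both computations are equivalent.
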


\begin{proof}
(i) We apply (\ref{AMM}) to get that
\begin{eqnarray}
&\,& m^3\sum_{k=0}^{n}C_{m,k}^3+3m^3{m-1\choose n}\sum_{j=0}^{m-1}{j\choose n}{j\choose m-n-1} \nonumber \cr
&\,&  =m^3\sum_{k=0}^{n}C_{m,k}^3+3m^2\sum_{k=0}^{n}(m-2k){m\choose k}^3\nonumber\cr
&\,&  =\sum_{k=0}^{n}\left((m-2k)^3+3m^2(m-2k)\right){m\choose k}^3\nonumber\cr
&\,&=\sum_{k=0}^{n}4\left((m-k)^3-k^3\right){m \choose k}^3=4\sum_{k=0}^{n}\left((m-k)^3{m \choose m-k}^3-k^3{m \choose k}^3\right)\nonumber\cr
&\,& =4\sum_{k=0}^{n}\left(m^3{m-1 \choose m-k-1}^3-m^3{m-1 \choose k-1}^3\right)\nonumber\cr
&\,& =4m^3\sum_{k=0}^{n}\left({m-1 \choose k}^3-{m-1 \choose k-1}^3\right)\nonumber= 4m^3\left({m-1 \choose n}^3-{m-1 \choose -1}^3\right)\cr
&\,& =4m^3{m-1 \choose n}^3.
\end{eqnarray}
Therefore, we obtain the desired identity.

(ii) We prove the identity by invoking an inductive process for $n$. For $n=1$, we directly check it. Now we assume that the desired identity holds for $n$. For $n+1$, we have
\begin{eqnarray*}\sum_{k=0}^{n+1}(-1)^k C^3_{m,k}&=&{m-3n\over m}(-1)^n{m-1\choose n}^3-{m-3\over m}\sum_{k=0}^{n-1}(-1)^k{m-1\choose k}^3\cr&\quad&\qquad \qquad+(-1)^{n+1}\left({m-2n-2\over m}{m\choose n+1}\right)^3.
\end{eqnarray*}
On the other hand, observe that
$$
\left({m-2n-2\over m}{m\choose n+1}\right)^3={m-3n-3\over m}{m-1\choose n+1}^3+ {2m-3n-3\over m}{m-1\choose n}^3.
$$
Therefore, we obtain  the identity
$$\sum_{k=0}^{n+1}(-1)^k C^3_{m,k}={m-3(n+1)\over m}(-1)^{n+1}{m-1\choose n+1}^3-{m-3\over m}\sum_{k=0}^{n}(-1)^k{m-1\choose k}^3.
$$

\end{proof}

As a nice consequence of Theorem \ref{sumacubo}, we obtain expressions of $\sum_{k=1}^{n}B_{n,k}^3$,  $\sum_{k=1}^{n+1}A_{n,k}^3$ and $ \sum_{k=1}^{n+1}(-1)^kA_{n,k}^3$ in the next corollary. To check this last sum, we use  Dixon's identity, $$\sum_{k=0}^{2n}(-1)^k{2n\choose k}^3=(-1)^n{2n \choose n}{3n\choose n}, \qquad n\ge 1.$$

\begin{corollary}\label{cubes} For $n\ge 1$, we have
\begin{itemize}
 \item[(i)]$\displaystyle \sum_{k=0}^{n}B_{n,k}^3=\frac{1}{2}{2n \choose n}^3-\frac{3}{2}{2n \choose n}\sum_{j=n}^{2n-1}{j \choose n}{j \choose n-1}$,
\item[(ii)] $\displaystyle \sum_{k=1}^{n+1}A_{n,k}^3={2n \choose n}^3-3{2n \choose n}\sum_{j=n}^{2n-1}{j \choose n}^2$,
 \item[(iii)] $\displaystyle{
\sum_{k=1}^{n+1}(-1)^k A_{n,k}^3={n-1\over 2n+1}{2n \choose n}{3n\choose n}.}$
\end{itemize}
\end{corollary}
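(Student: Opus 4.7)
The plan is to reduce each sum to a sum of $C_{m,k}^3$ by means of the identities $B_{n,k}=C_{2n,n-k}$ and $A_{n,k}=C_{2n+1,n+1-k}$, and then invoke Theorem \ref{sumacubo}. For (i), the substitution $j=n-k$ and the vanishing $C_{2n,n}=0$ give $\sum_{k=1}^{n}B_{n,k}^{3}=\sum_{j=0}^{n}C_{2n,j}^{3}$. Applying Theorem \ref{sumacubo}(i) with $m=2n$ and using $\binom{2n-1}{n}=\tfrac{1}{2}\binom{2n}{n}$ produces the claimed expression directly, because the inner sum ranges over $\binom{j}{n}\binom{j}{n-1}$ and vanishes for $j<n$.

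For (ii), the analogous substitution yields $\sum_{k=1}^{n+1}A_{n,k}^{3}=\sum_{j=0}^{n}C_{2n+1,j}^{3}$. Applying Theorem \ref{sumacubo}(i) with $m=2n+1$ is decisive, since $m-n-1=n$ forces the inner sum to be $\sum_{j=0}^{2n}\binom{j}{n}^{2}$. I would then peel off the top term $j=2n$, whose value is $\binom{2n}{n}^{2}$. That single term collapses the leading coefficient by the identity
\[
4\binom{2n}{n}^{3}-3\binom{2n}{n}\cdot\binom{2n}{n}^{2}=\binom{2n}{n}^{3},
\]
leaving exactly the truncated sum $\sum_{j=n}^{2n-1}\binom{j}{n}^{2}$ in the statement.

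For (iii), the substitution $j=n+1-k$ introduces a global sign and gives
\[
\sum_{k=1}^{n+1}(-1)^{k}A_{n,k}^{3}=(-1)^{n+1}\sum_{j=0}^{n}(-1)^{j}C_{2n+1,j}^{3}.
\]
I apply Theorem \ref{sumacubo}(ii) with $m=2n+1$, which reduces the problem to evaluating the partial alternating sum $S:=\sum_{k=0}^{n-1}(-1)^{k}\binom{2n}{k}^{3}$. The symmetry $(-1)^{2n-k}\binom{2n}{2n-k}^{3}=(-1)^{k}\binom{2n}{k}^{3}$ pairs the two halves of Dixon's identity, so
\[
2S+(-1)^{n}\binom{2n}{n}^{3}=(-1)^{n}\binom{2n}{n}\binom{3n}{n},
\]
giving a closed form for $S$. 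Substituting this back and carefully combining the $\binom{2n}{n}^{3}$ contributions from Theorem \ref{sumacubo}(ii) and from $S$ makes those terms cancel, leaving only the $\binom{2n}{n}\binom{3n}{n}$ piece with coefficient $(n-1)/(2n+1)$; together with the outer factor $(-1)^{n+1}$ the signs recombine to the claimed positive answer.

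I expect part (iii) to be the main obstacle: it requires tracking two distinct sign conventions (the alternation in $k$ versus the reversed alternation in $j$), applying Dixon's identity only after cutting it in half with the symmetry argument, and recognising the non-obvious cancellation of the $\binom{2n}{n}^{3}$ contributions. Parts (i) and (ii) should be essentially bookkeeping once the reduction to Theorem \ref{sumacubo}(i) is in place.
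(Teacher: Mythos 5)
Your proposal is correct and follows essentially the same route as the paper: reindex via $B_{n,k}=C_{2n,n-k}$ and $A_{n,k}=C_{2n+1,n+1-k}$, apply Theorem \ref{sumacubo}(i) with $m=2n$ and $m=2n+1$ for parts (i) and (ii) (using $\binom{2n-1}{n}=\tfrac12\binom{2n}{n}$ and peeling off the $j=2n$ term), and for part (iii) apply Theorem \ref{sumacubo}(ii), fold the half-range alternating sum into the full one by the symmetry $k\mapsto 2n-k$, and finish with Dixon's identity. The sign bookkeeping and the cancellation of the $\binom{2n}{n}^3$ terms that you describe are exactly what happens in the paper's computation.
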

\begin{proof}  From  Theorem \ref{sumacubo}(i), we have
 \begin{eqnarray*}
 \sum_{k=0}^{n}B_{n,k}^3&=&\sum_{k=0}^{n}C_{2n,k}^3=4{2n-1 \choose n}^3-3{2n-1 \choose n}\sum_{j=0}^{2n-1}{j \choose n}{j \choose n-1}\cr
 &=&\frac{1}{2}{2n \choose n}^3-\frac{3}{2}{2n \choose n}\sum_{j=n}^{2n-1}{j \choose n}{j \choose n-1},\cr
 \end{eqnarray*}
and
$$ \sum_{k=1}^{n+1}A_{n,k}^3=\sum_{k=0}^{n}C_{2n+1,k}^3\displaystyle =4{2n \choose n}^3-3{2n \choose n}\sum_{j=0}^{2n}{j \choose n}{j \choose n}={2n \choose n}^3-3{2n \choose n}\sum_{j=n}^{2n-1}{j \choose n}^2.$$

Now, from  Theorem \ref{sumacubo}(ii), we have
 \begin{eqnarray*}
 \sum_{k=1}^{n+1}(-1)^kA_{n,k}^3&=&\sum_{k=0}^{n}(-1)^{n+1-k}C_{2n+1,k}^3 ={n-1\over 2n+1}\left({2n \choose n}^3+2\sum_{k=0}^{n-1}(-1)^{n+k}{2n\choose k}^3\right)\cr
  &=&{n-1\over 2n+1}   \sum_{k=0}^{2n}(-1)^{n+k}{2n\choose k}^3 ={n-1\over 2n+1}{2n \choose n}{3n\choose n}.\cr
 \end{eqnarray*}
\end{proof}

\noindent{\bf Remark.} Note that the part (ii) of Corollary \ref{cubes}  may be written as
$$
\sum_{k=1}^{n+1} A_{n, k}^3=(n+1)C_n\left((2(n+1)C_n)^2-3a(n)\right),\qquad n\ge 1,
$$
where the integer sequence of numbers $(a(n))_{n\ge 0}$ is defined by
$$
a(n):=\sum_{k=0}^n{n+k\choose n}^2, \qquad n \in \N\cup\{0\}.
$$
Note that $a(0)=1,$ $a(1)=5$, $a(2)= 46$, $a(3)= 517$, $a(4)=6376$... . This sequence appears indexed in the  On-Line Encyclopedia of Integer Sequences by N.J.A. Sloane  (\cite{[Sl]}) with the reference $A112029$.

 The sequence $\left({2n\choose n}{3n\choose n}\right)_{n\ge 0}$ is known as De Bruijn's $S(3,n)$ and appears in the Sloane's On-Line Encyclopedia  with the reference $A006480$.

\begin{theorem}\label{cubes2} For $n \ge 1$,  the following identity holds:
$$\sum_{k=1}^{n}B_{n,k}^3
={1\over 2n}{2n \choose n}\sum_{k=1}^nk{2n-k-1\choose n-1}^2.
$$

\end{theorem}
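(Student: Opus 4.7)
The plan is to combine two results already established: Corollary 4.2(i), which gives the closed form
\[
\sum_{k=1}^{n}B_{n,k}^3=\frac{1}{2}\binom{2n}{n}\!\left[\binom{2n}{n}^2-3\sum_{j=n}^{2n-1}\binom{j}{n}\binom{j}{n-1}\right],
\]
and Theorem 3.2(ii), which decomposes $\binom{2n}{n}^2$ as a weighted sum of squared binomials of the form $\binom{2n-1-k}{n-1}^2$. After factoring out $\tfrac{1}{2}\binom{2n}{n}$, the identity to prove reduces to
\[
\binom{2n}{n}^2-3\sum_{j=n}^{2n-1}\binom{j}{n}\binom{j}{n-1}=\frac{1}{n}\sum_{k=1}^{n}k\binom{2n-k-1}{n-1}^2.
\]

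First I would bring the left-hand side into a common form. The elementary identity $\binom{j}{n}=\tfrac{j-n+1}{n}\binom{j}{n-1}$ gives $\binom{j}{n}\binom{j}{n-1}=\tfrac{j-n+1}{n}\binom{j}{n-1}^2$, so
\[
3\sum_{j=n}^{2n-1}\binom{j}{n}\binom{j}{n-1}=\frac{3}{n}\sum_{j=n-1}^{2n-1}(j-n+1)\binom{j}{n-1}^2,
\]
where the $j=n-1$ term is zero. On the other hand, the substitution $j=2n-1-k$ in Theorem 3.2(ii) produces
\[
\binom{2n}{n}^2=\sum_{j=n-1}^{2n-1}\frac{2j-n+2}{n}\binom{j}{n-1}^2,
\]
because $3n-2k=3n-2(2n-1-j)=2j-n+2$. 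Both pieces are now expressed as sums over $j\in\{n-1,\dots,2n-1\}$ with weights against $\binom{j}{n-1}^2$.

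Subtracting term by term, the coefficient of $\binom{j}{n-1}^2$ collapses:
\[
\frac{(2j-n+2)-3(j-n+1)}{n}=\frac{2n-1-j}{n},
\]
so that
\[
\binom{2n}{n}^2-3\sum_{j=n}^{2n-1}\binom{j}{n}\binom{j}{n-1}=\frac{1}{n}\sum_{j=n-1}^{2n-1}(2n-1-j)\binom{j}{n-1}^2.
\]
Finally, the reverse reindexing $k=2n-1-j$ turns this into $\tfrac{1}{n}\sum_{k=0}^{n}k\binom{2n-1-k}{n-1}^2$, and since the $k=0$ term vanishes, it equals $\tfrac{1}{n}\sum_{k=1}^{n}k\binom{2n-k-1}{n-1}^2$, which is exactly what is needed.

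The argument is essentially bookkeeping once the correct substitutions are in place; the only nontrivial insight is recognizing that Theorem 3.2(ii) delivers the decomposition of $\binom{2n}{n}^2$ in precisely the right shape to cancel, after appropriate reindexing, with the sum $\sum_{j=n}^{2n-1}\binom{j}{n}\binom{j}{n-1}$ once the factor $\tfrac{j-n+1}{n}$ is extracted. No induction, WZ-certificate, or further combinatorial identity is required.
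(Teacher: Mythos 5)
Your proposal is correct and follows essentially the same route as the paper: starting from Corollary \ref{cubes}(i), converting $\binom{j}{n}\binom{j}{n-1}$ to $\frac{j-n+1}{n}\binom{j}{n-1}^2$, and invoking the decomposition of $\binom{2n}{n}^2$ from Theorem \ref{main}(ii); the only difference is that you subtract the two weighted sums term by term whereas the paper adds $\sum_{k}\frac{k}{n}\binom{2n-k-1}{n-1}^2$ to the other sum and recognizes the total as $\binom{2n}{n}^2$ --- the same bookkeeping in reverse. All reindexings check out.
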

\begin{proof}
By Corollary \ref{cubes} (i), we have
\begin{eqnarray*}\sum_{k=1}^{n}B_{n,k}^3
=\frac{1}{2}{2n \choose n}\left({2n \choose n}^2-3\sum_{j=n}^{2n-1}{j \choose n}{j \choose n-1}\right)\cr
\end{eqnarray*}
and we claim  that
$$\displaystyle {2n \choose n}^2-3\sum_{j=n}^{2n-1}{j \choose n}{j \choose n-1}=\sum_{k=1}^{n}\frac{k}{n}{2n-k-1 \choose n-1}^2.$$
Note that
$$
 \sum_{j=n}^{2n-1}{j \choose n}{j \choose n-1}=\sum_{j=n-1}^{2n-1}\frac{j-n+1}{n}{j \choose n-1}^2=\sum_{k=0}^{n}\frac{n-k}{n}{2n-k-1 \choose n-1}^2,$$
and then we obtain
$$
3\sum_{j=n}^{2n-1}{j \choose n}{j \choose n-1}+\sum_{k=0}^{n}\frac{k}{n}{2n-k-1 \choose n-1}^2=\sum_{k=0}^{n}\frac{3n-2k}{n}{2n-k-1 \choose n-1}^2={2n \choose n}^2,$$
where we have applied  Theorem \ref{main} (ii).
\end{proof}

\noindent{\bf Remark.} Note that the  identity of Theorem \ref{cubes2} may be written as
$$
\sum_{k=1}^n B_{n, k}^3={n+1\over 2}C_nb(n),
$$
where the integer sequence of numbers $(b(n))_{n\ge 1}$ is defined by
$$
b(n):=\sum_{k=0}^n{k\over n}{2n-k-1\choose n-1}^2=\sum_{k=0}^n{n-k\over n}{n-1+k\choose n-1}^2 , \qquad n \in \N.
$$
Note that $b(1)=1,$ $b(2)=3$, $b(3)= 19$, $b(4)= 163$, $b(5)=1625,\ldots$. This sequence also appears indexed in the  On-Line Encyclopedia of Integer Sequences by N.J.A. Sloane (\cite{[Sl]}) with the reference $A183069$.

\section{Identities involving   harmonic numbers  and  Catalan triangle numbers }

\setcounter{theorem}{0}
\setcounter{equation}{0}

A large number of identities which included harmonic numbers $(H_n)_{n\ge 1}$, defined by (\ref{armoni}), have appeared in several papers: a systematic study of explicit formulas for sums of the form $\sum_{k=1}^n a_k H_k$ are given in  \cite{[Sp]}; some other finite summation identities
involving harmonic numbers are considered in \cite{[PS]} and proved by the WZ-theory; infinite series involving harmonic numbers are presented in
 \cite{[Chu2]}. See other approaches in \cite[Chapter 7]{[BQ]} and reference therein.

 However, the next nice relation between  Catalan triangle numbers $(C_{m, k})_{m\ge 1,    k \ge 0 }$ and
harmonic numbers $(H_n)_{n\ge 1}$ seems to be new. We also present the particular case  of $B_{n,k}$ and $A_{n,k}$ in Corollary \ref{cubes22}.

\begin{theorem}\label{teoharm} For $m\ge 1$ and $n\ge 1$, we have
\begin{equation}\label{harm}
\sum_{k=1}^{n}C_{m,k} H_k
={m-1 \choose n}H_n - {1\over m}\sum_{k=1}^n {m\choose k}.
\end{equation}

\end{theorem}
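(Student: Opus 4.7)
My plan is to avoid induction and instead use Abel summation (swap the order of summation), which turns the identity into a short calculation that plugs directly into Theorem \ref{alte}(i).

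First, I would write $H_k=\sum_{j=1}^{k}\frac{1}{j}$ and interchange the order of summation:
$$
\sum_{k=1}^{n}C_{m,k}H_k=\sum_{k=1}^{n}C_{m,k}\sum_{j=1}^{k}\frac{1}{j}=\sum_{j=1}^{n}\frac{1}{j}\sum_{k=j}^{n}C_{m,k}.
$$
Next, using $C_{m,0}=1$ together with Theorem \ref{alte}(i), the inner partial sum evaluates cleanly:
$$
\sum_{k=j}^{n}C_{m,k}=\sum_{k=0}^{n}C_{m,k}-\sum_{k=0}^{j-1}C_{m,k}=\binom{m-1}{n}-\binom{m-1}{j-1}.
$$
Substituting back gives
$$
\sum_{k=1}^{n}C_{m,k}H_k=\binom{m-1}{n}\sum_{j=1}^{n}\frac{1}{j}-\sum_{j=1}^{n}\frac{1}{j}\binom{m-1}{j-1}=\binom{m-1}{n}H_n-\sum_{j=1}^{n}\frac{1}{j}\binom{m-1}{j-1}.
$$

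The only remaining task is to verify that the leftover sum equals $\frac{1}{m}\sum_{k=1}^{n}\binom{m}{k}$. This follows from the elementary term-by-term identity
$$
\frac{1}{j}\binom{m-1}{j-1}=\frac{(m-1)!}{j!\,(m-j)!}=\frac{1}{m}\binom{m}{j},\qquad 1\le j\le n,
$$
which is just a rearrangement of the factorial definitions. Summing over $j$ from $1$ to $n$ yields exactly the required expression, completing the proof of \eqref{harm}.

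There is no real obstacle here: the hard work has already been done in Theorem \ref{alte}(i), and Abel summation reduces the harmonic identity to an identity for partial sums of $C_{m,k}$ plus a one-line factorial manipulation. As a sanity check, I would also verify the base case $n=1$ directly: the left side is $C_{m,1}H_1=m-2$, and the right side is $(m-1)\cdot 1-\frac{1}{m}\cdot m=m-2$, matching.
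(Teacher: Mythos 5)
Your proof is correct, and it takes a genuinely different route from the paper. The paper proves Theorem \ref{teoharm} by induction on $n$, absorbing the new term $C_{m,n+1}H_{n+1}$ via the auxiliary identity $\binom{m-1}{n}H_n-\frac{n+1}{m}\binom{m}{n+1}H_{n+1}=-\frac{1}{m}\binom{m}{n+1}$. You instead expand $H_k=\sum_{j\le k}1/j$, swap the order of summation, and feed the resulting partial sums $\sum_{k=j}^{n}C_{m,k}$ into Theorem \ref{alte}(i); the leftover sum $\sum_{j=1}^{n}\frac{1}{j}\binom{m-1}{j-1}$ collapses term by term to $\frac{1}{m}\sum_{j=1}^{n}\binom{m}{j}$ by the absorption identity $\frac{1}{j}\binom{m-1}{j-1}=\frac{1}{m}\binom{m}{j}$. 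All three steps check out. Your derivation has the advantage of explaining where the right-hand side comes from (the $\binom{m-1}{n}H_n$ term is the full sum weighted by $H_n$, and the binomial sum is the correction from the partial sums), and it exhibits the harmonic identity as a direct corollary of the linear sum in Section 2 rather than as an independent verification; the paper's induction is more mechanical but self-contained. One small point of hygiene: Theorem \ref{alte}(i) is stated only for $m\ge 2$, while Theorem \ref{teoharm} claims $m\ge 1$, so to cover $m=1$ you should either note that the formula $\sum_{k=0}^{n}C_{m,k}=\binom{m-1}{n}$ also holds for $m=1$ (trivially, since $C_{1,k}=0$ for $k\ge 2$ and $1-1=0=\binom{0}{n}$) or verify $m=1$ directly; this is a one-line remark, not a real gap.
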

\begin{proof} We prove the identities by invoking   an induction process for $n$. For $n=1$, we directly check it.  Now we assume that the identity  (\ref{harm}) holds for $n$. For $n+1$, we have
$$\sum_{k=1}^{n+1}C_{m,k} H_k={m-1 \choose n}H_n - {1\over m}\sum_{k=1}^n{m\choose k} +{m-2n-2\over m}{m \choose n+ 1}   H_{n+1}.
$$
On the other hand, observe that
$$ {m-1 \choose n}H_n - { n+1\over m}{m \choose n+ 1}   H_{n+1}=-{1\over m} {m\choose n+1}.
$$
Therefore, we obtain the identity
$$
\sum_{k=1}^{n+1}C_{m,k} H_k
={m-n-1\over m}{m \choose n+1}H_{n+1} - {1\over m}\sum_{k=1}^{n+1} {m\choose k}={m-1 \choose n+1}H_{n+1}- {1\over m}\sum_{k=1}^{n+1} {m\choose k}.
$$
\end{proof}

Using Theorem \ref{teoharm}, we will show the relationship of the harmonic numbers and the Catalan triangle numbers.

\begin{corollary}\label{cubes22} For $n \ge 1,$ we have
\begin{itemize}
\item[(i)] $
\displaystyle{\sum_{k=1}^{n}C_{n,k} H_{k}
 = {1-2^n\over n},}$
\item[(ii)]$
\displaystyle{\sum_{k=0}^{n-1}B_{n,k} H_{n-k}}
 =  \displaystyle{{2nH_n-1\over 4n}{2n \choose n} - {2^{2n-1}-1\over 2n},}$
\item[(iii)]$
\displaystyle{\sum_{k=1}^{n}A_{n,k} H_{n-k+1}}
 =  \displaystyle{H_n{2n \choose n} - {2^{2n}-1\over 2n+1}}.$
\end{itemize}

\end{corollary}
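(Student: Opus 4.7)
The plan is to deduce all three identities as direct specializations of Theorem \ref{teoharm}, which gives a closed form for $\sum_{k=1}^N C_{m,k}H_k$ in terms of a single harmonic number and a partial sum of a binomial row. All three statements are obtained by choosing $(m,N)$ appropriately, reindexing using the dictionary $B_{n,k}=C_{2n,n-k}$ and $A_{n,k}=C_{2n+1,n+1-k}$, and then simplifying the central binomial coefficient and the partial row-sum of the binomials.

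For (i) I would apply Theorem \ref{teoharm} with $m=n$ and upper index $n$. The first summand vanishes since $\binom{n-1}{n}=0$, and the second summand contains $\sum_{k=1}^{n}\binom{n}{k}=2^{n}-1$, giving $(1-2^{n})/n$ immediately. For (ii), I would rewrite
\[
\sum_{k=0}^{n-1}B_{n,k}H_{n-k}=\sum_{k=1}^{n-1}C_{2n,n-k}H_{n-k}=\sum_{j=1}^{n-1}C_{2n,j}H_{j}
\]
(using $B_{n,0}=0$ and the substitution $j=n-k$), and then apply Theorem \ref{teoharm} with $m=2n$, $N=n-1$. The boundary binomial is $\binom{2n-1}{n-1}=\tfrac12\binom{2n}{n}$, and the symmetry of Pascal's row gives
\[
\sum_{k=1}^{n-1}\binom{2n}{k}=\tfrac12\bigl(2^{2n}-\tbinom{2n}{n}\bigr)-1=2^{2n-1}-\tfrac12\tbinom{2n}{n}-1.
\]
Converting $H_{n-1}=H_{n}-1/n$ and collecting terms, the $\binom{2n}{n}$-coefficient becomes $(2nH_n-1)/(4n)$, matching the claim.

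For (iii), the same strategy works: substitute $j=n-k+1$ to get
\[
\sum_{k=1}^{n}A_{n,k}H_{n-k+1}=\sum_{j=1}^{n}C_{2n+1,j}H_{j},
\]
and apply Theorem \ref{teoharm} with $m=2n+1$, $N=n$. The boundary binomial is exactly $\binom{2n}{n}$, which is the leading term. For the row-sum of $\binom{2n+1}{k}$, symmetry $\binom{2n+1}{k}=\binom{2n+1}{2n+1-k}$ splits $2^{2n+1}$ into two equal halves, giving $\sum_{k=0}^{n}\binom{2n+1}{k}=2^{2n}$, whence $\sum_{k=1}^{n}\binom{2n+1}{k}=2^{2n}-1$; dividing by $2n+1$ yields precisely the subtracted term.

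There is no real obstacle here beyond careful bookkeeping: the reindexing must match the direction in which $H_{n-k}$ and $H_{n-k+1}$ are read, and one must remember $B_{n,0}=0$ so that the sum in (ii) genuinely starts at $k=1$ before changing variable. The nontrivial analytic input — summation of $C_{m,k}H_{k}$ — is already packaged in Theorem \ref{teoharm}; everything else is binomial symmetry and the identity $H_{n}=H_{n-1}+1/n$.
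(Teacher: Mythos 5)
Your proposal is correct and follows essentially the same route as the paper: all three identities are obtained by specializing Theorem \ref{teoharm} after reindexing via $B_{n,k}=C_{2n,n-k}$ and $A_{n,k}=C_{2n+1,n+1-k}$ and evaluating the partial binomial row sums by symmetry. The only cosmetic difference is in (ii), where the paper keeps the vanishing $j=n$ term and applies the theorem with upper index $n$ (so no $H_{n-1}=H_n-1/n$ conversion is needed), which also sidesteps the degenerate instance $n=1$ of your choice of upper index $n-1$.
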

\begin{proof}   On the one hand, we have
$$
\displaystyle{\sum_{k=1}^{n}C_{n,k} H_{k}={-1\over n}\sum_{k=1}^n {n\choose k}
 = {1-2^n\over n}.}
 $$
On the other hand, taking into account   identities $$\displaystyle{2\sum_{k=0}^{n}{2n\choose k}-{2n\choose n}=\sum_{k=0}^{2n}{2n\choose k}=2^{2n}}\quad \mbox{and} \quad  \displaystyle{2\sum_{k=0}^{n}{2n+1\choose k}=\sum_{k=0}^{2n+1}{2n+1\choose k} =2^{2n+1} },$$
we have
$$
\displaystyle{\sum_{k=0}^{n-1}B_{n,k} H_{n-k}}=\sum_{k=1}^nC_{2n, k}H_k=H_n{2n-1\choose n}-{1\over 2n}\sum_{k=1}^n{2n \choose k}={2nH_n-1\over 4n}{2n \choose n} - {2^{2n-1}-1\over 2n},
$$
and
$$
\displaystyle{\sum_{k=1}^{n}A_{n,k} H_{n-k+1}}=\sum_{k=1}^nC_{2n+1, k}H_k=H_n{2n\choose n}-{1\over 2n+1}\sum_{k=1}^n{2n+1 \choose k}={H_n}{2n \choose n} - {2^{2n}-1\over 2n+1}.
$$

\end{proof}
\noindent{\rm {\bf{Remark.}} By Corollary \ref{cubes22} (i), we have
$$
\sum_{k=1}^{n}(n-2k) H_k{n \choose k}
={1-2^n},
$$
which was shown in  \cite[Formula (13)]{[PS]}.
}

\section{New conjectures, final comments and conclusions}
\setcounter{theorem}{0}
\setcounter{equation}{0}

In this last section, we present two conjectures about new identities in Catalan triangle numbers. We have directly checked that these identities  hold for first values of $n$ and $m$. Although  analytic proofs are not yet available,  alternative proofs as to apply WZ-theory (\cite{[PS], [WZ]}) or some mathematical software, indicate us that these equalities hold. Note that an analytic proof will give us some extra information about these nature of the sums. To conclude the paper, we present some final comments and conclusions.

\begin{conjecture}\label{firstcon}{\rm For $m>n \ge 1$ and an odd integer  $p$, the factor $ m-1\choose n $ divides
$ \sum_{k=0}^nC_{m,k}^p.$ Note that  the conjecture holds for  $p=1$ and $p=3$, see Theorem \ref{alte} (i) and Theorem \ref{sumacubo} respectively. Now we present two important cases of this conjecture.
\begin{itemize}

\item[(i)] Taking into count that  $B_{n, k}=C_{2n,n-k}$, a positive answer of the conjecture \ref{firstcon} would imply that the factor ${n+1\over 2}C_n$ divides
${\sum_{k=1}^nB_{n,k}^p}
$ for $n \ge 1 $ and an odd integer  $p$. In the case that $p=1$ and $p=3$ the sums are explicitly given in (\ref{suma1}) and Corollary \ref{cubes}(i) respectively. We have directly checked that the factor ${n+1\over 2}C_n$ divides
${\sum_{k=1}^nB_{n,k}^5}
$ for first values of $n$.

\item[(ii)] Now we considerer that $A_{n, k}=C_{2n+1,n+1-k}$. A positive answer of the conjecture \ref{firstcon} would imply that the factor $(n+1)C_n$ divides
$\sum_{k=1}^{n+1}A_{n,k}^p$ for $n \ge 1 $ and an odd integer  $p$. For $p=1$ and $p=3$ these sums are given in (\ref{suma1}) and Corollary \ref{cubes} (ii) respectively. We have also checked that ${(n+1)}C_n$ divide
${\sum_{k=1}^{n+1}A_{n,k}^5}
$ for first values of $n$.

\end{itemize}

}\end{conjecture}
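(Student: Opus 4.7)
\medskip
\noindent{\bf Proof proposal.}\ \ My plan is to combine a polynomial-in-$m$ argument (yielding weak, polynomial-level divisibility) with an attempt to upgrade this to the required integer divisibility via an explicit closed form generalizing Theorem~\ref{sumacubo}(i).

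First, (\ref{newcn}) rewrites as $C_{m,k}=\binom{m-1}{k}-\binom{m-1}{k-1}$, so $P(m):=\sum_{k=0}^n C_{m,k}^p$ is a polynomial of degree $pn$ in $m$ with rational coefficients, integer-valued on positive integers. The symmetry $C_{m,m-k}=-C_{m,k}$, immediate from the definition, gives $C_{m,m-k}^p=-C_{m,k}^p$ when $p$ is odd. For each positive integer $j\le n$, the vanishing $C_{j,k}=0$ for $k>j$ truncates the sum to $P(j)=\sum_{k=0}^j C_{j,k}^p$, and the substitution $k\mapsto j-k$ combined with the symmetry forces $P(j)=-P(j)$. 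Hence $P$ vanishes at $m=1,2,\ldots,n$, so $(m-1)(m-2)\cdots(m-n)=n!\binom{m-1}{n}$ divides $P(m)$ in $\mathbb{Q}[m]$.

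This polynomial divisibility is by itself not quite the claim: writing $P(m)=n!\binom{m-1}{n}Q(m)$ with $Q\in\mathbb{Q}[m]$, concluding $\binom{m-1}{n}\mid P(m)$ in the integers requires the non-automatic fact that $n!\,Q(m)\in\mathbb{Z}$ at integer $m>n$. To supply this, I would try to derive an explicit formula for $P(m)$ in which $\binom{m-1}{n}$ appears as an overt factor, extending the $p=3$ case. The starting point is that $(m-k)^p-k^p$ is anti-symmetric under $k\mapsto m-k$, so it admits a unique decomposition
\[
(m-k)^p-k^p=\sum_{r=0}^{(p-1)/2}\beta_r(m)(m-2k)^{2r+1}
\]
with $\beta_r(m)\in\mathbb{Q}[m]$ and $\beta_{(p-1)/2}(m)=2^{1-p}$. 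Multiplying by $\binom{m}{k}^p$ and applying $(m-k)\binom{m}{k}=m\binom{m-1}{k}$ and $k\binom{m}{k}=m\binom{m-1}{k-1}$ turns the left side into $m^p\bigl[\binom{m-1}{k}^p-\binom{m-1}{k-1}^p\bigr]$, telescoping on $k=0,\ldots,n$ to $m^p\binom{m-1}{n}^p$. Isolating the $r=(p-1)/2$ summand on the right, which equals $2^{1-p}m^p P(m,n,p)$, yields
\[
P(m,n,p)=2^{p-1}\binom{m-1}{n}^p-\sum_{r=0}^{(p-3)/2}\frac{2^{p-1}\beta_r(m)}{m^p}\sum_{k=0}^n(m-2k)^{2r+1}\binom{m}{k}^p,
\]
reducing the problem to showing that each inner sum $\sum_{k=0}^n(m-2k)^{2r+1}\binom{m}{k}^p$ is divisible by $\binom{m-1}{n}$.

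The hard part, then, is to establish, for $0\le r\le (p-3)/2$, generalized Ohtsuka--Takagi-type identities of the shape
\[
\sum_{k=0}^n(m-2k)^{2r+1}\binom{m}{k}^p=\binom{m-1}{n}\cdot\Lambda_r(m,n,p)
\]
with $\Lambda_r(m,n,p)$ an integer combinatorial expression. The case $r=0$, $p=3$ is exactly (\ref{AMM}), but for $p\ge 5$ and $r\ge 1$ such identities are not classical and producing them analytically appears genuinely nontrivial. My backup plan, in the spirit of the authors' remarks, would be to apply WZ creative telescoping to $P(m,n,p)$ to produce a recurrence in $n$ whose leading coefficient is a simple multiple of $\binom{m-1}{n}$; this would simultaneously certify the divisibility and, by inspection of the recurrence, expose the underlying closed form. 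Once the main divisibility $\binom{m-1}{n}\mid\sum_{k=0}^n C_{m,k}^p$ is established, parts (i) and (ii) of the conjecture follow at once by specialising $m=2n$ and $m=2n+1$ and using $\binom{2n-1}{n}=\tfrac{n+1}{2}C_n$ and $\binom{2n}{n}=(n+1)C_n$.
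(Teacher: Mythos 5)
First, the statement you are addressing is presented in the paper as Conjecture~\ref{firstcon}: the authors explicitly say that no analytic proof is available, so there is no proof in the paper to compare against. Judged on its own, your proposal is an honest and partially successful attack, and the parts you actually carry out are correct. The identity $C_{m,k}=\binom{m-1}{k}-\binom{m-1}{k-1}$, the antisymmetry $C_{m,m-k}=-C_{m,k}$, and the consequent vanishing of $P(m)=\sum_{k=0}^n C_{m,k}^p$ at $m=1,\dots,n$ all check out, so $n!\binom{m-1}{n}$ divides $P(m)$ in $\mathbb{Q}[m]$. Your telescoping reduction is also sound: with $\beta_r(m)=2^{1-p}\binom{p}{2r+1}m^{p-2r-1}$ it yields the clean identity
\[
\sum_{k=0}^n C_{m,k}^p=2^{p-1}\binom{m-1}{n}^p-\sum_{r=0}^{(p-3)/2}\binom{p}{2r+1}\sum_{k=0}^n C_{m,k}^{2r+1}\binom{m}{k}^{p-2r-1},
\]
which correctly recovers Theorem~\ref{sumacubo}(i) when $p=3$ via (\ref{AMM}); and the specializations $m=2n$, $m=2n+1$ with $\binom{2n-1}{n}=\frac{n+1}{2}C_n$, $\binom{2n}{n}=(n+1)C_n$ are fine.

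Nevertheless there is a genuine gap, precisely where you flag it, so this is a reformulation with supporting evidence rather than a proof. The argument reduces the conjecture for exponent $p$ to showing that each inner sum $\sum_{k=0}^n(m-2k)^{2r+1}\binom{m}{k}^p$, $0\le r\le(p-3)/2$, is divisible by $\binom{m-1}{n}$ --- a family of statements of exactly the same type (odd moments of $p$-th powers of binomial coefficients) and at least as hard as the original claim; for $p\ge 5$ no analogue of the Ohtsuka--Tauraso identity (\ref{AMM}) is known and you do not produce one. Similarly, the polynomial-root argument only gives divisibility by $n!\binom{m-1}{n}$ in $\mathbb{Q}[m]$, and upgrading this to integer divisibility by $\binom{m-1}{n}$ is precisely the content of the conjecture, not a routine integrality check. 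The WZ ``backup plan'' is a proposal, not an executed computation, and creative telescoping on $P(m)$ for general odd $p$ cannot be run symbolically in $p$ anyway. In short: the statement remains a conjecture, both in the paper and after your proposal, though your reduction identity above is a genuinely useful step that the paper does not contain.
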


\bigskip
\begin{conjecture} {\rm For $n,m\in \N$, the identity
$$
\displaystyle \sum_{k=1}^{r}B_{n,k}^2B_{m,k}=\frac{1}{2}{2n \choose n}^2{2m \choose m}\biggl[1-\frac{n+2m}{r}{n+m \choose n}^{-1}{n+r \choose n}^{-1}\sum_{j=0}^{r-1}{s+j \choose s}{n+j \choose n-1}\biggr],
$$
holds where $r=\min(n,m)$ and $s=\max(n,m)$. In the particular case, $m=n$, we recover the  identity given in Corollary \ref{cubes} (i). Note that the nature of this formula is different than the formula (\ref{ident}).
}
\end{conjecture}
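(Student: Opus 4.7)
The plan is to reformulate the identity so that both the left-hand side and the inner sum on the right-hand side appear as hypergeometric sums in $k$. Multiplying the conjecture by $2r\binom{n+m}{n}\binom{n+r}{n}$ and reindexing the inner sum by $k=j+1$, the claim is equivalent to
\[
2r\binom{n+r}{n}\binom{n+m}{n}\, S_1 \;+\; (n+2m)\binom{2n}{n}^2\binom{2m}{m}\, S_2 \;=\; r\binom{n+r}{n}\binom{n+m}{n}\binom{2n}{n}^2\binom{2m}{m},
\]
where $S_1 := \sum_{k=1}^{r} B_{n,k}^2 B_{m,k}$ and $S_2 := \sum_{k=1}^{r}\binom{s+k-1}{s}\binom{n+k-1}{n-1}$. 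Each summand is a proper hypergeometric term in $k$ (ratios of consecutive terms are rational in $k$), so Zeilberger's creative telescoping algorithm applies directly.

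I would then carry out the proof in three steps. First, apply Zeilberger's algorithm to $S_1$, treating $n$ and $m$ as fixed and $r$ as the summation bound, to obtain a finite-order linear recurrence in $r$ with polynomial coefficients in $n,m,r$; likewise for $S_2$, which is a partial hypergeometric sum of ${}_3F_2$ type and should admit a shorter recurrence. Second, verify that the linear combination on the left-hand side satisfies the (essentially trivial) recurrence obeyed by the right-hand side, which after division by $r\binom{n+r}{n}$ becomes a polynomial in $r$. Third, match the base case at $r=1$, where, using $B_{n,1}=C_n$, the identity reduces to an elementary identity between binomial coefficients. Because $s=\max(n,m)$ switches between $n$ and $m$, the cases $n\leq m$ and $n>m$ must be treated separately, although the second should follow from the first by a parameter swap once the asymmetric roles of $n$ and $m$ in $S_1$ are accounted for.

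The principal obstacle is conceptual rather than computational. WZ-theory would yield a certificate-based proof, but as the authors emphasize in the introduction, this tends to hide the structural reason for the identity. The prefactor $\tfrac{n+2m}{r}\binom{n+m}{n}^{-1}\binom{n+r}{n}^{-1}$ is strongly reminiscent of those arising from contiguous relations for classical hypergeometric series, such as Watson's ${}_3F_2$ transformation or Karlsson--Minton-type reductions. In parallel to the WZ approach, I would therefore attempt an analytic route: recast both $S_1$ and $S_2$ in hypergeometric notation and search for a classical transformation that produces exactly this prefactor. A successful match would not only prove the conjecture but would explain the appearance of the factor $(n+2m)$, which the computer-algebra approach cannot illuminate.
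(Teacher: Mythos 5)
This statement is left as a conjecture in the paper: the authors explicitly state that they could not find an analytic proof and that only WZ-type computer verification ``indicates'' the identity holds. Your submission is therefore not being measured against a paper proof but on its own terms, and on its own terms it is a research plan, not a proof. Nothing is actually established: you never run Zeilberger's algorithm, never exhibit the recurrences or certificates for $S_1$ and $S_2$, never verify that your linear combination satisfies the recurrence of the right-hand side, and never check the base case. The final paragraph is explicitly conditional (``I would attempt\dots A successful match would\dots''). Invoking the existence of an algorithm that \emph{could} produce a proof is not the same as producing one; the authors themselves already make exactly this observation when they say that WZ-theory appears to confirm the identity.

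There is also a concrete structural problem with the plan as stated. You propose to treat $n$ and $m$ as fixed and obtain ``a recurrence in $r$,'' but $r=\min(n,m)$ is not a free parameter: once $n$ and $m$ are fixed there is no discrete variable left for creative telescoping, and the sum $S_1$ already runs over the full support of its summand (since $B_{n,k}B_{m,k}$ vanishes for $k>r$). To make Zeilberger's method applicable you must either introduce a genuinely free upper limit and prove a more general ``indefinite'' identity that specializes to the conjecture, or fix $d=m-n$ and recurse on $n$; in the latter case the quantity $s=\max(n,m)$ and the factor $(n+2m)$ both shift with $n$, and the order of the resulting recurrence dictates how many initial values must be checked --- none of which your base-case discussion at $r=1$ addresses. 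Until the recurrences are computed, the telescoping certificates exhibited, and the initial conditions matched (in both cases $n\le m$ and $n>m$), the conjecture remains open.
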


\subsection*{Final comments and conclusions} In this paper we have presented a unified study of two families of Catalan triangle numbers. We have considered finite sums of powers (linear, squares and cubes) of  these numbers to show original (and nice) identities involving Catalan numbers (section 2-4). Some of these equalities solve some open problems and connect Catalan sequences with other some known sequences, see for example Theorem \ref{cubes2}.  Note that we have not considered moments on these sums of powers as in other  papers in the literature, see for example \cite{[CC], [MR2], [Zhang]}. We have also presented a natural connection between harmonic numbers and Catalan triangle numbers which seems to be new and  may be completed in later studies. Finally some conjectures about other sums of Catalan triangle numbers are posed.


\end{document}